\documentclass[a4paper, 10pt]{article}
  \usepackage{amssymb}
\usepackage{amsmath, amsthm, enumitem}
\usepackage{verbatim}
\PassOptionsToPackage{hyphens}{url}\usepackage{hyperref}
\usepackage{graphicx}
\usepackage{setspace}
\pagestyle{plain}
\usepackage{mathrsfs, todonotes}
\usepackage[all]{xy}

\newcommand{\Spectra}{\mathrm{Sp}}
\newcommand{\Sp}{\Spectra}



\newcommand{\ra}{\rightarrow}

\newcommand{\Fun}{\mathrm{Fun}}
\newcommand{\Map}{\text{Map}}

\newcommand{\Wedge}{\vee}
\newcommand{\BigWedge}{\bigvee}

\newcommand{\x}{\times}



\DeclareMathOperator{\colim}{colim}


\DeclareMathOperator{\sk}{\mathrm{sk}}


\newcommand{\xscr}{\mathscr{X}}

\newcommand{\cscr}{\mathscr{C}}
\newcommand{\dscr}{\mathscr{D}}

\newcommand{\mscr}{\mathscr{M}}

\newcommand{\ltri}{\left<}
\newcommand{\rtri}{\right>}

\DeclareMathOperator{\Alg}{Alg}
\theoremstyle{plain}
\newtheorem{thm}{Theorem}[section]
\newtheorem{cor}[thm]{Corollary}
\newtheorem{prop}[thm]{Proposition}
\newtheorem{lem}[thm]{Lemma}

\newtheorem{defn}{Definition}

\begin{document}
\title{Monadicity of the Bousfield-Kuhn functor} 
\author{Rosona Eldred, Gijs Heuts, Akhil Mathew, and Lennart Meier}
\maketitle

\begin{abstract}
Let $\mscr_n^f$ be the localization of the $\infty$-category of spaces at the $v_n$-periodic equivalences, the case $n=0$ being rational homotopy theory. We prove that $\mscr_n^f$ is for $n\geq 1$ equivalent to algebras over a certain monad on the $\infty$-category of $T(n)$-local spectra. This monad is built from the Bousfield--Kuhn functor.  
\end{abstract}


\section{Introduction}
We fix a prime $p$ and work $p$-locally throughout this introduction. A map $f\colon X \to Y$ between simply-connected $p$-local spaces is a rational homotopy equivalence if $\pi_*(X)[\tfrac1p] \to \pi_*(Y)[\tfrac1p]$ is an isomorphism. Rational homotopy theory concerns itself with the localization of the $\infty$-category of (simply connected) spaces at the rational homotopy equivalences. Quillen provided both a coalgebraic model of rational homotopy theory (via cocommutative coalgebras in rational chain complexes) and an algebraic model (via Lie algebras in rational chain complexes). Under finite type assumptions, one can also dualize the coalgebra model to a cochain model in commutative differential graded algebras, an approach pursued by Sullivan.

From the point of view of chromatic homotopy theory, rational homotopy is only the first step in a sequence of `telescopic' localizations. Such localizations have been studied by Mahowald \cite{MahowaldJ}, Thompson \cite{Thompson}, Davis \cite{Davis} and Bousfield \cite{Bou01}, to name just a few. Given a finite type $n$ complex $V$ with $v_n$ self-map $v\colon \Sigma^d V \to V$, one defines the $v$-periodic homotopy groups of a pointed space $X$ with coefficients in $V$, denoted
$$
v^{-1}\pi_*(X;V),
$$
by inverting the action of $v$ on the homotopy groups of the space of pointed maps $\mathrm{Map}_*(V,X)$. Maps inducing isomorphisms in $v$-periodic homotopy groups are called \emph{$v_n$-equivalences}; the asymptotic uniqueness of $v_n$ self-maps \cite{hopkinssmith} implies that this notion depends only on $n$ and not on $V$ or $v$. Localizing the $\infty$-category of $p$-local pointed spaces at the $v_n$-equivalences results in an $\infty$-category for which we write $\mscr_n^f$. It was first studied by Bousfield \cite{Bou01} (although with different notation). It is an unstable analogue of the category of $T(n)$-local spectra $\Sp_{T(n)}$, where $T(n)$ denotes the mapping telescope of a $v_n$ self-map on a finite type $n$ \emph{spectrum} (rather than space).

Bousfield and Kuhn \cite{KuhnTelescopic} constructed a functor
\[\Phi\colon \mscr_n^f \to \Sp_{T(n)}\]
with the property that $\pi_*(\Phi(X)^V) \cong v^{-1}\pi_*(X; V)$. A map of pointed spaces is then a $v_n$-periodic equivalence if and only if it is sent to an equivalence of spectra by $\Phi$. Behrens and Rezk \cite{BehrensRezk} relate $\Phi(X)$ to the topological Andr\'{e}-Quillen cohomology of the nonunital $E_\infty$-algebra $S_{K(n)}^X$. The assignment $X \mapsto S_{K(n)}^{X}$ can be thought of as a `cochain model' which is analogous to the Sullivan model in rational homotopy theory. 


We will study the Bousfield-Kuhn functor directly and show that it provides an algebraic model for $\mscr_n^f$, which is closely related to Quillen's Lie model of rational spaces. By \cite{heuts-periodic} and \cite{Bou01}, the Bousfield--Kuhn functor is the right adjoint of an adjunction

\[
\xymatrix{
\mscr_n^f \ar@<-3pt>[r]_{\Phi}&\ar@<-3pt>[l]_{\Theta} \Sp_{T(n)}
}
\]
between $\infty$-categories. This adjunction in particular gives a monad $\Phi\Theta$ on the $\infty$-category $\Sp_{T(n)}$, to which one associates a category of algebras $\Alg_{\Phi\Theta} (\Sp_{T(n)})$. Our main result is the following:

\begin{thm}
The Bousfield-Kuhn functor $\Phi$ exhibits $\mscr_n^f$ as monadic over $\Sp_{T(n)}$, that is,  
\[
\mscr_n^f \simeq  \Alg_{\Phi\Theta} (\Sp_{T(n)})
\]
as $\infty$-categories.
\end{thm}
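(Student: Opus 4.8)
The plan is to deduce the theorem from the Barr--Beck--Lurie monadicity theorem, applied to the adjunction $\Theta \dashv \Phi$ recalled above. The unit and counit of that adjunction make $\Phi(X)$ into a $\Phi\Theta$-algebra naturally in $X$, which produces a comparison functor $\mscr_n^f \to \Alg_{\Phi\Theta}(\Sp_{T(n)})$, and it suffices to verify the two hypotheses of the monadicity theorem: (a) $\Phi$ is conservative; and (b) $\mscr_n^f$ admits geometric realizations of $\Phi$-split simplicial objects and $\Phi$ preserves them. Since $\mscr_n^f$ is a presentable $\infty$-category \cite{Bou01, heuts-periodic} it is cocomplete, so the existence part of (b) is automatic, and the content of the argument is concentrated in conservativity and --- above all --- in the \emph{preservation} statement.

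Conservativity is a matter of unwinding definitions. By \cite{Bou01} the $\infty$-category $\mscr_n^f$ is the accessible --- hence reflective --- localization of pointed $p$-local spaces at the $v_n$-equivalences, equivalently the full subcategory of $v_n$-local spaces, and under this identification $\Phi$ is the restriction of the Bousfield--Kuhn functor $\mathcal{S}_* \to \Sp_{T(n)}$. If $f$ is a morphism in $\mscr_n^f$ with $\Phi(f)$ invertible, then $f$ is by construction a $v_n$-equivalence; and a $v_n$-equivalence between $v_n$-local objects is an equivalence. Hence $\Phi$ is conservative.

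The preservation statement is the heart of the proof, and it is genuinely nontrivial, since $\Phi$ is not even an excisive functor and so does not preserve colimits in general. I would attack it through Goodwillie calculus. Passing along the reflective localization --- which, being a left adjoint, commutes with geometric realizations, and along which simplicial objects of $\mscr_n^f$ lift --- reduces the claim to the assertion that the Bousfield--Kuhn functor $\Phi\colon \mathcal{S}_* \to \Sp_{T(n)}$ carries the realization of a $\Phi$-split simplicial space $X_\bullet$ to $\colim_{\Delta^{op}} \Phi(X_\bullet)$. Using the $v_n$-periodic convergence of the Goodwillie tower one has $\Phi \simeq \holim_k P_k\Phi$ \cite{BehrensRezk, heuts-periodic}. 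Each polynomial approximation $P_k\Phi$ is an iterated extension of its layers $D_j\Phi(X) \simeq \bigl(\partial_j\Phi \smsh (\Sigma^\infty X)^{\smsh j}\bigr)_{h\Sigma_j}$ for $j \leq k$, and every operation occurring here --- $\Sigma^\infty$, smash powers (here one uses that $\Delta^{op}$ is sifted), homotopy orbits, smashing with a fixed spectrum, and finite fiber sequences inside the stable $\infty$-category $\Sp_{T(n)}$ --- commutes with geometric realizations; hence so does $P_k\Phi$, giving $P_k\Phi(|X_\bullet|) \simeq |P_k\Phi(X_\bullet)|$ for every $k$. What remains is to interchange the tower limit $\holim_k$ with the geometric realization, i.e.\ to see $\holim_k |P_k\Phi(X_\bullet)| \simeq |\holim_k P_k\Phi(X_\bullet)| = |\Phi(X_\bullet)|$, and this is precisely where the $\Phi$-split hypothesis must be spent: the splitting identifies $\colim_{\Delta^{op}}\Phi(X_\bullet)$ with the augmentation term of a split augmented simplicial object, hence as an absolute colimit, which --- together with the sparsity and convergence properties of the $v_n$-periodic Goodwillie tower of $\Phi$ --- should force the interchange. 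Making this last step precise is, I anticipate, the principal obstacle; granting it, the monadicity theorem applies and yields $\mscr_n^f \simeq \Alg_{\Phi\Theta}(\Sp_{T(n)})$.

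Finally, the conclusion fits the Lie-algebraic picture of $v_n$-periodic homotopy theory alluded to in the introduction: under that description $\Phi$ corresponds to (a shift of) the forgetful functor from Lie algebras in $\Sp_{T(n)}$, which is conservative and preserves sifted colimits, and $\Phi\Theta$ to the associated free Lie algebra monad. The monadicity statement recovers such a model abstractly, from the universal property of $\Alg_{\Phi\Theta}(\Sp_{T(n)})$, without constructing the Lie model by hand; this also serves as a consistency check, the rational analogue ($n=0$) being classical by work of Quillen.
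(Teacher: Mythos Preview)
Your overall strategy (Barr--Beck--Lurie, conservativity straightforward, the work lies in preservation of realizations) matches the paper's. But your execution of the preservation step is quite different from the paper's, and the step you yourself flag as ``the principal obstacle'' is a genuine gap that you have not closed. The $\Phi$-split hypothesis gives you control over the simplicial object $\Phi(X_\bullet) = \holim_k P_k\Phi(X_\bullet)$, but says nothing about the individual towers $P_k\Phi(X_\bullet)$ or their layers; there is no evident mechanism by which an absolute colimit structure on the limit of a tower propagates to an interchange of $\holim_k$ with $|{-}|$. Vague appeals to ``sparsity and convergence properties'' do not supply one. There is also a logical hazard: the convergence statement $\Phi \simeq \holim_k P_k\Phi$ on $\mscr_n^f$ that you invoke from \cite{heuts-periodic} is established in the same body of work that builds the Lie model on top of the monadicity theorem you are trying to prove, so you would need to check carefully that you are not arguing in a circle.

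The paper avoids Goodwillie calculus entirely and proves the stronger statement that $\Phi$ preserves \emph{all} geometric realizations in $\mscr_n^f$, by an elementary reduction. The key moves are: (i) it suffices to check after $(-)^W$ for a finite type $n$ complex $W$, which replaces $\Phi$ by the explicit telescopic functor $\Phi_w$; (ii) since $\Phi_w$ is a sequential colimit of functors of the form $\Omega^{kd}\Sigma^\infty\Map_*(W,-)$, and $\Sigma^\infty$, $\Omega^{kd}$ commute with sifted colimits, one is reduced to showing that $\Map_*(W,-)$ preserves geometric realizations on $\mathcal{S}_*\langle d_{n+1}\rangle$; (iii) the crucial trick is that the embedding $\mscr_n^f \hookrightarrow \mathcal{S}_*$ depends on a \emph{choice} of type $n{+}1$ complex $V_{n+1}$, and one is free to take $V_{n+1}$ so highly connected that every object of $\mscr_n^f$ is $\dim(W)$-connective; (iv) finally, a Bousfield--Friedlander/$\pi_*$-Kan argument shows that $\Map_*(W,-)$ commutes with geometric realizations of simplicial spaces that are levelwise $\dim(W)$-connective, by skeletal induction on $W$. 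This argument is short, self-contained, and uses nothing beyond classical homotopy theory; it also yields more than Barr--Beck requires.
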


The main step in the proof is to show that $\Phi$ commutes with geometric realizations. In forthcoming work, the second author will identify the monad $\Phi\Theta$ with the free Lie algebra monad (in a sense appropriate to the present context), which proves that $\mscr_n^f$ is equivalent to the $\infty$-category of Lie algebras in $T(n)$-local spectra. This is the parallel between our approach here and Quillen's Lie algebra model for rational homotopy theory. 

Our plan for this paper is as follows. In Sections 2 and 3 we give some background on the $\infty$-category $\mscr_n^f$ and the $\infty$-categorical Barr-Beck monadicity theorem. In Section 4 we prove our main theorem. Throughout this paper, we will work in the language of $\infty$-categories. In particular, $\colim$ means an $\infty$-categorical colimit, $|-|$ means an $\infty$-categorical colimit over a simplicial diagram etc., although we will sometimes add the word `homotopy' for emphasis. We call a space \emph{finite} if it is weakly equivalent to a finite CW-complex.

\subsection*{Acknowledgments} 

We would like to thank Mark Behrens for helpful discussions. 
This work was begun through a Junior Trimester
Program at the Hausdorff Institute of Mathematics, and we thank the HIM for
its hospitality. The third author was supported by the NSF Graduate Fellowship
under grant DGE-114415, and was a Clay Research Fellow when this work was
finished. The fourth author was supported by DFG SPP 1786.

\section{The category $\mscr_n^f$}\label{sec:mnf}

In this section we summarize the basics of unstable telescopic homotopy theory, following Bousfield \cite{Bou01} and the forthcoming \cite{heuts-periodic}. We will follow the notation of the latter and use, in particular, the notation $\mscr_n^f$ for the $\infty$-categorical analogue of Bousfield's $\mathcal{UN}^f_n$. Everything we do is implicitly localized at a fixed prime $p$. 

As in the stable case, one may approximate a space $X$ at a prime $p$ by a tower $\{L_n^f X\}_{n \geq 0}$ of left Bousfield localizations away from finite $p$-local type $n+1$ spaces $V_{n+1}$. To be precise, a finite space $E$ is \textit{type $n$} when the $i$th Morava K-theory of it is trivial for $i<n$, but non-trivial for $i=n$. We choose for every $n \geq  0$ a finite $p$-local type $n+1$ suspension space $V_{n+1}$ and write $L_n^f$ for the left Bousfield localization with respect to the map $V_{n+1} \rightarrow *$. The localization $L_n^f$ depends only on the connectivity $d_{n+1}$ of the $V_{n+1}$. Recall that a space $Y$ is \textit{$m$-connective} when $\pi_k Y \simeq \ast$ for all $k <m$. We denote by $L_n^f \mathcal{S}_\ast \ltri d_{n+1} \rtri$ the $\infty$-category of $L_n^f$-local pointed spaces that are $d_{n+1}$-connected (i.e.\ $(d_{n+1}+1)$-connective). These localizations are related by natural transformations $L_n^f \ra L_{n-1}^f$, provided we arrange our choices so that the connectivity of 
$V_{n+1}$ is greater than or equal to the connectivity of $V_n$. Bousfield \cite{Bou01} chooses the $V_{n+1}$ so that their connectivity is as low as possible, but we will not make this restriction.

The map $X \to L_n^f X$ is a $v_i$-equivalence for $i \leq n$. Moreover, the $v_i$-periodic homotopy groups of $L_n^f X$ vanish for $i > n$, making the tower of $L_n^f$-localizations of $X$ analogous to a Postnikov tower.  The homotopy fiber $M_n^fX$ of the map $L_n^f X \ra L_{n-1}^f X$ then has the same $v_n$-periodic homotopy groups as $X$, but its $v_i$-periodic homotopy vanishes for $i \neq n$.  

\begin{defn}
The $\infty$-category $\mscr_n^f$ is the full subcategory of $L_n^f \mathcal{S}_\ast \ltri d_{n+1} \rtri$ on spaces that are of the form $M_n^f X \ltri d_{n+1} \rtri$.
\end{defn}

Bousfield's work \cite{Bou01} then yields the following characterization of $\mscr_n^f$ as a localization (see \cite[Theorem 2.2]{heuts-periodic}):

\begin{thm}
The $\infty$-category $\mscr_n^f$ is the localization of $\mathcal{S}_*$ at the $v_n$-periodic equivalences. More precisely, precomposition with the functor
\begin{equation*}
\mathcal{S}_* \ra \mscr_n^f: X \mapsto M_n^f X \ltri d_{n+1} \rtri
\end{equation*}
gives, for any $\infty$-category $\cscr$, an equivalence of $\infty$-categories
\begin{equation*}
\mathrm{Fun}(\mscr_n^f, \cscr) \rightarrow \mathrm{Fun}_{v_n}(\mathcal{S}_*, \cscr).
\end{equation*}
Here $\mathrm{Fun}_{v_n}$ denotes the full subcategory consisting of those functors sending $v_n$-equivalences to equivalences.
\end{thm}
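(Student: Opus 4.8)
The plan is to deduce this from the standard recognition principle for localizations of $\infty$-categories. Write $F\colon \mathcal{S}_\ast \to \mscr_n^f$ for the functor $X\mapsto M_n^f X\ltri d_{n+1}\rtri$ and $G\colon \mscr_n^f\hookrightarrow\mathcal{S}_\ast$ for the (fully faithful) inclusion. It suffices to verify three things: (1) $F$ carries $v_n$-equivalences to equivalences; (2) $FG\simeq \id_{\mscr_n^f}$, i.e.\ $F$ is idempotent on its essential image; and (3) there is a natural zig-zag of $v_n$-equivalences between $\id_{\mathcal{S}_\ast}$ and $GF$. Granting these, for any $\cscr$ the precomposition functor $-\circ F\colon \mathrm{Fun}(\mscr_n^f,\cscr)\to\mathrm{Fun}(\mathcal{S}_\ast,\cscr)$ lands in $\mathrm{Fun}_{v_n}$ by (1); it is essentially surjective onto $\mathrm{Fun}_{v_n}$ because any $v_n$-inverting $H$ satisfies $H\simeq H\circ GF = (H\circ G)\circ F$ by (3); and it is an equivalence onto $\mathrm{Fun}_{v_n}$ because by (2) the functor $-\circ G$ is a retraction of $-\circ F$, and a functor admitting a retraction and essentially surjective onto its target is an equivalence.

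Step (3) is the easy part, built from the properties of $L_n^f$ and $M_n^f$ recalled above. Take the natural zig-zag
\[
X \longrightarrow L_n^f X \longleftarrow M_n^f X \longleftarrow M_n^f X\ltri d_{n+1}\rtri .
\]
The first map is a $v_n$-equivalence because $X\to L_n^f X$ is a $v_i$-equivalence for all $i\le n$; the second is a $v_n$-equivalence by the long exact sequence of the fiber sequence $M_n^f X\to L_n^f X\to L_{n-1}^f X$ together with the vanishing of the $v_n$-periodic homotopy of $L_{n-1}^f X$; and the third is a $v_n$-equivalence because $V_{n+1}$ is $d_{n+1}$-connected, so $\mathrm{Map}_\ast(V_{n+1},-)$ does not distinguish a space from its $d_{n+1}$-connected cover, and hence the $v_n$-periodic homotopy is unchanged. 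Step (2) is the idempotency of $F$ on its essential image: for $Z=M_n^f X\ltri d_{n+1}\rtri$ one checks $M_n^f Z\ltri d_{n+1}\rtri\simeq Z$, using that $Z$ is $L_n^f$-local and that $L_{n-1}^f$ applied to the $d_{n+1}$-connected space $Z$ is sufficiently trivial in the relevant range, so that the counit $M_n^f Z\ltri d_{n+1}\rtri\to Z$ is an equivalence. This step is bookkeeping with connectivity and the localization functors.

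The essential difficulty is Step (1). If $f\colon X\to Y$ is a $v_n$-equivalence, then $M_n^f f\ltri d_{n+1}\rtri$ is a map in $\mscr_n^f$ that is again a $v_n$-equivalence --- indeed it is an isomorphism on $v_i$-periodic homotopy for \emph{every} $i$, since source and target have vanishing $v_i$-periodic homotopy for $i\ne n$ --- and we must upgrade this to the statement that it is an equivalence of spaces. Equivalently, one needs the \emph{conservativity of the Bousfield--Kuhn functor on $\mscr_n^f$}: a map between objects of $\mscr_n^f$ inducing an isomorphism on $v_n$-periodic homotopy is an equivalence. This is the substantive input, and is exactly what Bousfield establishes in \cite{Bou01}; it is genuinely a theorem about the monochromatic category and fails outside it (for instance, mod-$p$ Eilenberg--MacLane spaces have vanishing $v_i$-periodic homotopy for all $i$), its proof relying on the fact that an $L_n^f$-local, $L_{n-1}^f$-acyclic space is recovered from its $v_n$-periodic homotopy in a convergent way.
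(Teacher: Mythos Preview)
The paper does not give its own proof of this theorem; it attributes the result to Bousfield \cite{Bou01} and defers the $\infty$-categorical formulation to \cite[Theorem~2.2]{heuts-periodic}. Your outline is the standard recognition of a full subcategory as a localization, and you correctly isolate the substantive input as Step~(1): Bousfield's theorem that a $v_n$-equivalence between objects of $\mscr_n^f$ is an equivalence of spaces.

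Two points deserve correction. First, your justification for the third map in the zig-zag is misstated: you invoke $V_{n+1}$, but $V_{n+1}$ is a type $n{+}1$ complex and is irrelevant to $v_n$-periodic homotopy. The correct reason $M_n^f X\langle d_{n+1}\rangle \to M_n^f X$ is a $v_n$-equivalence is simply that $v^{-1}\pi_*(-;V)$ is periodic and therefore insensitive to any connected cover. Second, the abstract claim ``a functor admitting a retraction and essentially surjective onto its target is an equivalence'' is false in general (take $i\colon \ast \to BM$ for any nontrivial monoid $M$). What you actually have, and should say, is that $-\circ G$ is a \emph{two-sided} inverse to $-\circ F$ on $\mathrm{Fun}_{v_n}$: condition~(2) gives $(-\circ G)(-\circ F)\simeq\id$, and condition~(3) gives a natural equivalence $(-\circ F)(-\circ G)=-\circ GF\simeq\id$ on $v_n$-inverting functors. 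Your Step~(2) is also not free bookkeeping: it requires knowing that $L_{n-1}^f Z$ is contractible for $Z\in\mscr_n^f$, which is again part of Bousfield's analysis rather than something formal.
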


It is important to note that while the embedding of $\mscr_n^f$ into $\mathcal{S}_*$ depends on the choice of $V_{n+1}$, the $\infty$-category $\mscr_n^f$ itself is well-defined up to equivalence. Indeed, this follows from the universal property of the preceding theorem. The localization $\mscr_n^f$ turns out to have good formal properties. In particular, \cite[Theorem 2.2]{heuts-periodic} guarantees that it is a presentable $\infty$-category, so that $\mscr_n^f$ has all colimits. Moreover, those colimits are preserved by the inclusion
\begin{equation*}
\mscr_n^f \rightarrow L_n^f\mathcal{S}_\ast \ltri d_{n+1} \rtri.
\end{equation*}

\begin{cor}
\label{cor:conservative}
The Bousfield-Kuhn functor factors through a functor
\begin{equation*}
\Phi: \mscr_n^f \rightarrow \Sp_{T(n)}
\end{equation*}
which is \emph{conservative}, i.e., a map $\varphi$ in $\mscr_n^f$ is an equivalence if and only if $\Phi(\varphi)$ is an equivalence.
\end{cor}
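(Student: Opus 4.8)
Write $\Phi_{\mathrm{BK}}\colon \mathcal{S}_\ast \to \Sp_{T(n)}$ for the Bousfield--Kuhn functor on pointed spaces, reserving the notation $\Phi$ for the functor on $\mscr_n^f$ we are about to produce, and write $L\colon \mathcal{S}_\ast \to \mscr_n^f$ for the localization $X \mapsto M_n^f X\ltri d_{n+1}\rtri$. The statement contains two assertions --- that $\Phi_{\mathrm{BK}}$ factors through $L$, and that the resulting factorization $\Phi$ is conservative --- and I would treat the first as essentially immediate. By the defining property of the Bousfield--Kuhn functor recalled in the introduction, a map of spaces is a $v_n$-equivalence if and only if $\Phi_{\mathrm{BK}}$ sends it to an equivalence; in particular $\Phi_{\mathrm{BK}}$ inverts all $v_n$-equivalences, so the universal property of $\mscr_n^f$ established above produces a functor $\Phi\colon \mscr_n^f \to \Sp_{T(n)}$, unique up to equivalence, with $\Phi \circ L \simeq \Phi_{\mathrm{BK}}$.

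For conservativity the plan is to exploit the converse half of that property, namely that $\Phi_{\mathrm{BK}}$ \emph{detects} $v_n$-equivalences. The single nontrivial ingredient I need is that $L$ restricts to the identity on $\mscr_n^f$ along the (fully faithful) inclusion $\iota\colon \mscr_n^f \hookrightarrow \mathcal{S}_\ast$. Granting $L\iota \simeq \id_{\mscr_n^f}$, one gets $\Phi \simeq \Phi \circ L \circ \iota \simeq \Phi_{\mathrm{BK}} \circ \iota$, i.e.\ $\Phi$ is nothing but the restriction of the Bousfield--Kuhn functor to the full subcategory $\mscr_n^f$. Then, for a morphism $\varphi$ of $\mscr_n^f$ with $\Phi(\varphi)$ invertible, $\Phi_{\mathrm{BK}}(\iota\varphi)$ is invertible, so $\iota\varphi$ is a $v_n$-equivalence of spaces by the detection property, and hence $L(\iota\varphi) \simeq \varphi$ is an equivalence because $L$ inverts $v_n$-equivalences. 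The reverse implication is trivial.

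To pin down $L\iota \simeq \id_{\mscr_n^f}$ I would use that precomposition with $L$ is fully faithful on functor $\infty$-categories out of $\mscr_n^f$ (a formal consequence of the universal property), so that it suffices to produce a natural equivalence $L \simeq L \circ \iota \circ L$ of functors $\mathcal{S}_\ast \to \mscr_n^f$; and since $L$ inverts $v_n$-equivalences, it is in turn enough to exhibit a natural zigzag of $v_n$-equivalences between $X$ and $M_n^f X\ltri d_{n+1}\rtri = \iota L X$. The zigzag
\[
X \lra L_n^f X \la M_n^f X \la M_n^f X\ltri d_{n+1}\rtri
\]
does this: the first map is a $v_i$-equivalence for $i \le n$; the second is a $v_n$-equivalence because $L_{n-1}^f X$ has vanishing $v_n$-periodic homotopy, so that the fiber $M_n^f X$ has the same $v_n$-periodic homotopy as $L_n^f X$; and for $n \ge 1$ the third is a $v_n$-equivalence because one may compute $v_n$-periodic homotopy using an arbitrarily highly connected finite type $n$ complex, which does not see $d_{n+1}$-connected covers. (When $n = 0$ one checks $L\iota \simeq \id$ directly, since the objects of $\mscr_0^f$ are already rational and $d_1$-connected.)

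The one step I expect to require genuine care is the identification $\Phi \simeq \Phi_{\mathrm{BK}} \circ \iota$, equivalently $L\iota \simeq \id$: this is a structural fact about the monochromatic layer $M_n^f$ rather than a formal manipulation, and if one prefers to avoid spelling out the zigzag it can be quoted from Bousfield's analysis in \cite{Bou01} as developed in \cite{heuts-periodic}. Everything else --- the factorization and the final conservativity argument --- is then bookkeeping with the universal property of $\mscr_n^f$ and the detection property of $\Phi_{\mathrm{BK}}$.
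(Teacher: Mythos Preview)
Your argument is correct and shares the same underlying logic as the paper's: conservativity of $\Phi$ on $\mscr_n^f$ ultimately reduces to the statement that $\Phi_{\mathrm{BK}}$ detects $v_n$-equivalences of spaces, combined with the fact that the factorized $\Phi$ is just the restriction of $\Phi_{\mathrm{BK}}$ along the inclusion $\iota$. Where you differ is in which half you justify and which you take for granted. The paper treats $\Phi \simeq \Phi_{\mathrm{BK}}\circ\iota$ as implicit (since $\mscr_n^f$ is by definition a full subcategory of spaces) and instead explains \emph{why} $\Phi_{\mathrm{BK}}$ detects $v_n$-equivalences: a map of $T(n)$-local spectra is an equivalence iff it is after applying $(-)^V$ for a finite type $n$ complex $V$, and $\pi_*(\Phi(X)^V)\cong v^{-1}\pi_*(X;V)$. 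You take this detection property as given from the introduction and instead spell out carefully why $L\iota\simeq\id_{\mscr_n^f}$ via the zigzag $X\to L_n^f X\leftarrow M_n^f X\leftarrow M_n^f X\langle d_{n+1}\rangle$. Your extra work buys a cleaner categorical justification of the passage from ``$v_n$-equivalence in $\mathcal{S}_*$'' to ``equivalence in $\mscr_n^f$'', while the paper's buys an explicit reason the Bousfield--Kuhn functor sees $v_n$-periodic homotopy; the two are complementary and together constitute a fully detailed proof.
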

\begin{proof}
This is a consequence of the fact that $\varphi$ is a $v_n$-periodic equivalence if and only if $\Phi(\varphi)$ is an equivalence. To see this, one uses first that a map of $T(n)$-local spectra $E \rightarrow F$ is an equivalence if and only if $E^V \rightarrow F^V$ is an equivalence, with $V$ a finite type $n$ complex with $v_n$-self map $v\colon \Sigma^dV \to V$; this follows as $T(n)$ can be described as the mapping telescope of $v$. The result follows by the natural identification
\begin{equation*}
\pi_*(\Phi(X)^V) \cong v^{-1}\pi_*(X;V)
\end{equation*}
mentioned before.
\end{proof}

In fact, the spectrum $\Phi(X)^V$ can be described in a rather explicit way, which also makes the identification of its homotopy groups as in the preceding proof clear. One defines a spectrum $\Phi_v(X)$ by setting 
\begin{equation*}
\Phi_v(X)_0 = \mathrm{Map}_*(V,X), \, \Phi_v(X)_d = \mathrm{Map}_*(V,X), \,\ldots, \Phi_v(X)_{kd} = \mathrm{Map}_*(V,X), \,\ldots,
\end{equation*}
and using the maps
\begin{equation*}
\Phi_v(X)_{kd} = \mathrm{Map}_*(V,X) \xrightarrow{v^*} \mathrm{Map}_*(\Sigma^d V, X) \cong \Omega^d\Phi_v(X)_{(k+1)d}
\end{equation*}
as structure maps. Then $\Phi_v$ is the \emph{telescopic functor} associated to the self-map $v$. There is an equivalence of spectra (see \cite[Theorem 1.1]{KuhnTelescopic})
\begin{equation*}
\Phi(X)^V \simeq \Phi_v(X).
\end{equation*}


Bousfield shows (Theorem 5.4(i),(ii) of \cite{Bou01}) that on the level of homotopy categories the functor of the previous corollary admits a left adjoint $\Theta$. However, his techniques also prove the stronger result below. The necessary straightforward modifications are in the proof of Theorem 2.3 of \cite{heuts-periodic}. 

\begin{prop} The functor $\Phi$ admits a left adjoint
\begin{equation*}
\Theta: \Sp_{T(n)} \rightarrow \mscr_n^f.
\end{equation*}
\end{prop}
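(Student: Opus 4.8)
The plan is to produce $\Theta$ as the left adjoint of $\Phi$ by invoking the $\infty$-categorical adjoint functor theorem. Both $\infty$-categories in sight are presentable: $\mscr_n^f$ by the cited \cite[Theorem 2.2]{heuts-periodic}, and $\Sp_{T(n)}$ because it is an accessible Bousfield localization of $\Sp$. So it is enough to check that $\Phi$ preserves small limits and is accessible. For both I would argue through the explicit description $\Phi(X)^V \simeq \Phi_v(X)$ of the telescopic functor, which gives $\Omega^\infty\bigl(\Phi(X)^V\bigr) \simeq \colim_k \Omega^{kd}\mathrm{Map}_\ast(V,X)$ for a chosen finite type $n$ complex $V$ with $v_n$-self map $v\colon \Sigma^d V \to V$. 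As recorded in the proof of Corollary~\ref{cor:conservative}, the functors $(-)^V$ jointly detect equivalences of $T(n)$-local spectra, so a comparison map in $\Sp_{T(n)}$ built from $\Phi$ may be tested after applying $(-)^V$.

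Accessibility is the easy half. Since $V$ is a finite complex, $\mathrm{Map}_\ast(V,-)$ commutes with filtered colimits of spaces, and the function spectrum functor $(-)^V$ on $\Sp_{T(n)}$ likewise commutes with filtered colimits; filtered colimits in $\mscr_n^f$ are computed in $L_n^f\mathcal{S}_\ast\ltri d_{n+1}\rtri$ and hence, after forgetting to $\mathcal{S}_\ast$, are ordinary filtered colimits of spaces; finally $\Sigma^\infty$, the functors $\Omega^{kd}$, and the sequential colimit over $k$ all commute with filtered colimits. Combining these through the displayed formula shows that $\Phi(\colim_i X_i)^V \to \colim_i \Phi(X_i)^V$ is an equivalence for every $V$, so $\Phi$ commutes with filtered colimits and is in particular accessible.

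The substance is in showing that $\Phi$ preserves small limits, and I expect this to be the main obstacle. After applying $(-)^V$, which does preserve all small limits, and unwinding the formula above, the only place compatibility could fail is in commuting the sequential colimit $\colim_k$ (and $\Sigma^\infty$) past a general --- possibly infinite --- limit. The resolution uses that limits in $\mscr_n^f$ are \emph{not} the limits of $\mathcal{S}_\ast$: the inclusion $\mscr_n^f \hookrightarrow L_n^f\mathcal{S}_\ast\ltri d_{n+1}\rtri$ preserves colimits, not limits, so a limit in $\mscr_n^f$ is obtained from the corresponding limit in $\mathcal{S}_\ast$ by applying the monochromatic-layer functor $M_n^f(-)\ltri d_{n+1}\rtri$; since $\Phi$ sees only $v_n$-periodic homotopy it is insensitive to this last step, and one is left to check that $v^{-1}\pi_\ast(-;V)$ is suitably compatible with limits of diagrams that already lie in $\mscr_n^f$. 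This is precisely the kind of statement verified in the proof of \cite[Theorem 2.3]{heuts-periodic}, and carrying it over is the ``straightforward modification'' referred to above. Alternatively, and perhaps more transparently, one can sidestep the adjoint functor theorem and directly lift Bousfield's construction of $\Theta$ (Theorem 5.4 of \cite{Bou01}) to an $\infty$-functor, assembled from the same mapping-space and colimit formulas, and then verify the adjunction equivalence $\Map_{\mscr_n^f}(\Theta E, X) \simeq \Map_{\Sp_{T(n)}}(E, \Phi X)$ directly; this again comes down to the colimit/limit bookkeeping just described.
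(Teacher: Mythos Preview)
The paper does not actually prove this proposition: the paragraph preceding it is the entire argument, deferring to Bousfield's explicit construction of $\Theta$ in \cite[Theorem~5.4]{Bou01} together with the $\infty$-categorical upgrade in \cite[Theorem~2.3]{heuts-periodic}. So your second alternative---lifting Bousfield's construction directly and checking the mapping-space adjunction---is exactly what the cited references do, and is the route the paper has in mind.

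Your primary approach via the adjoint functor theorem is genuinely different, and the accessibility half is clean. The limit-preservation half, however, is not settled by what you wrote. Your description of limits in $\mscr_n^f$ as ``limit in $\mathcal{S}_\ast$ followed by $M_n^f(-)\ltri d_{n+1}\rtri$'' presumes that $M_n^f(-)\ltri d_{n+1}\rtri$ is right adjoint to the inclusion $\mscr_n^f \hookrightarrow L_n^f\mathcal{S}_\ast\ltri d_{n+1}\rtri$, which you have not justified (it is plausible but requires an argument that $\Map(Y, L_{n-1}^f X)$ is contractible for $Y \in \mscr_n^f$). Even granting that, you are still left with the core problem you identified: the sequential colimit in the formula for $\Phi_v$ need not commute with an arbitrary limit in $\mathcal{S}_\ast$, and nothing you have said explains why it does for diagrams valued in $\mscr_n^f$. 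You then defer this to \cite[Theorem~2.3]{heuts-periodic}, but that reference proceeds via the explicit construction of $\Theta$ rather than by verifying limit preservation abstractly; so your adjoint-functor-theorem route, as written, is circular. If you want to make the AFT approach self-contained, the missing idea is that $\Phi$ restricted to $\mscr_n^f$ can be identified with a composite of right adjoints (essentially because $\Phi\Omega^\infty \simeq \mathrm{id}$ on $\Sp_{T(n)}$ lets one rewrite $\Phi$ on $\mscr_n^f$ in terms of mapping spectra), rather than by pushing the telescopic formula through limits.
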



%
%

\section{Monads and the Barr-Beck-Lurie Theorem} 


For background on modules over monads in an $\infty$-categorical setting, the reader can consult \cite{RiehlVerity} or \cite{HA}. 

\begin{defn}\cite[Definition 4.7.0.1]{HA}
Let $\cscr$ be an $\infty$-category. A \textbf{monad} $M$ on $\cscr$ is an algebra object of $\Fun(\cscr, \cscr)$ with respect to the composition monoidal structure. If $M$ is a monad on $\cscr$, we let $\Alg_M(\cscr)$ denote the associated $\infty$-category of (left) $M$-modules in $\cscr$.
\end{defn}

\begin{defn}\cite[Definition 4.7.4.4.]{HA} Let $G :\dscr \ra \cscr$ be a functor between $\infty$-categories. Assume that $G$ has a left adjoint $F$, so that there is a corresponding monad $M \simeq G \circ F$ on $\cscr$. We will say that \textbf{$\dscr$ is monadic over
$\cscr$} if the induced functor $G : \dscr \ra \Alg_M (\cscr)$ is an equivalence of $\infty$-categories.
\end{defn}

Next we state Lurie's version of the Barr-Beck theorem, also known as the monadicity theorem. 
We only state a special case that we need here. 
\begin{thm}\cite[Theorem 4.7.0.3]{HA}
Suppose we are given a pair of adjoint functors
\[
\xymatrix{
\cscr \ar@<3pt>[r]^F & \dscr \ar@<3pt>[l]^G
}
\]
between $\infty$-categories where $\dscr$ admits geometric realizations of simplicial objects. Assume that
\begin{itemize} \itemsep -0.1cm
\item[(i)] $G$ is conservative and
\item[(ii)] $G$ preserves geometric realizations of simplicial objects. 
\end{itemize}
Then $\dscr$ is monadic over $\cscr$.
\end{thm}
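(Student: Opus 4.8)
The plan is to prove monadicity directly by showing that the canonical comparison functor $\bar{G}\colon \dscr \to \Alg_M(\cscr)$ is an adjoint equivalence. Here $\bar{G}$ sends $d \in \dscr$ to the object $G(d)$, equipped with the $M$-module structure $M G(d) = GFG(d) \to G(d)$ induced by the counit of $F \dashv G$; writing $U\colon \Alg_M(\cscr) \to \cscr$ for the forgetful functor and $F_M$ for the free-algebra functor, it satisfies $U \circ \bar{G} = G$ and $\bar{G} \circ F = F_M$. In particular $\bar{G}$ is conservative, since $G = U \circ \bar{G}$ is conservative by (i) and $U$ is conservative. The strategy is to construct a left adjoint $L$ to $\bar{G}$ using only geometric realizations in $\dscr$, and then to check that both the unit and the counit of $L \dashv \bar{G}$ are equivalences, with hypotheses (i) and (ii) entering precisely at the last two steps.

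To build $L$, I would use the bar construction. For an $M$-module $A$ with underlying object $U(A)$, there is a simplicial object $[n] \mapsto F (GF)^n U(A)$ in $\dscr$, whose face and degeneracy maps are assembled from the monad multiplication, the module structure map of $A$, the unit, and the counit; I set $L(A) := \bigl| [n] \mapsto F(GF)^n U(A) \bigr|$, which exists because $\dscr$ admits geometric realizations. That $L$ is left adjoint to $\bar{G}$ is then a formal mapping-space computation: $\Map_\dscr(L(A), d)$ is the totalization of $[n] \mapsto \Map_\dscr(F(GF)^n U(A), d) \simeq \Map_\cscr((GF)^n U(A), G(d))$, and this cosimplicial object assembles to $\Map_{\Alg_M(\cscr)}(A, \bar{G}(d))$.

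For the counit, one identifies $L\bar{G}(d) \simeq \bigl| [n] \mapsto (FG)^{n+1}(d) \bigr|$, the realization of the canonical resolution of $d$ by the comonad $FG$, augmented to $d$ via the counit. Applying $G$ turns this into the augmented simplicial object $[n] \mapsto M^{n+1} G(d) \to G(d)$ in $\cscr$, which carries extra degeneracies coming from the unit $\id \to GF = M$; such a split augmented simplicial object is an absolute colimit diagram, so $\bigl| M^{\bullet + 1} G(d) \bigr| \simeq G(d)$. Since $G$ preserves geometric realizations by (ii), $G$ carries the counit $L\bar{G}(d) \to d$ to an equivalence, and conservativity (i) forces the counit itself to be an equivalence.

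For the unit I would run the dual argument. Applying $\bar{G}$ levelwise to the defining diagram of $L(A)$ produces the bar resolution $[n] \mapsto F_M M^n U(A)$ of $A$ in $\Alg_M(\cscr)$; after applying $U$ this becomes the split augmented object $[n] \mapsto M^{n+1} U(A) \to U(A)$, again an absolute colimit diagram. Because such $U$-split geometric realizations are created by $U$, the bar resolution realizes to $A$, and a comparison of $U$-images together with the conservativity of $U$ shows that $\bar{G}$ preserves the realization defining $L(A)$; combining these gives $\bar{G} L(A) \simeq A$ naturally, i.e.\ the unit is an equivalence. I expect the main obstacle to be the rigorous $\infty$-categorical handling of the bar and comonadic resolutions as honest functors out of $\Delta$, together with the fact that a split augmented simplicial object is an absolute colimit diagram and that $U$ creates colimits of $U$-split simplicial objects; these are the coherence-laden inputs, and conservativity is exactly what licenses descending the resulting equivalences from $\cscr$ back to $\dscr$.
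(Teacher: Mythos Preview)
The paper does not prove this theorem: it is quoted verbatim as \cite[Theorem 4.7.0.3]{HA} and explicitly flagged as ``a special case that we need here,'' with no argument supplied. So there is nothing in the paper to compare your proposal against.

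That said, your outline is the standard bar-resolution proof of Barr--Beck and is essentially the strategy Lurie uses in \emph{Higher Algebra}. One small comment: in the general monadicity theorem one only needs $G$ to preserve realizations of \emph{$G$-split} simplicial objects (and $\dscr$ to admit those), and in your counit argument that is exactly what is used --- the simplicial object $(FG)^{\bullet+1}d$ becomes split only after applying $G$. The paper's hypothesis (ii), that $G$ preserves \emph{all} geometric realizations, together with the blanket assumption that $\dscr$ admits all geometric realizations, is strictly stronger than what your argument actually requires; this is why the paper calls it a special case. Your acknowledgement that the delicate part is making the bar and comonadic resolutions into honest simplicial objects at the $\infty$-categorical level, and invoking that split augmented simplicial diagrams are absolute colimits, is accurate.
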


\section{The Bousfield-Kuhn functor is monadic}


In this section we establish that the Bousfield-Kuhn functor $\Phi\colon \mscr_n^f \to \Sp_{T(n)}$ satisfies the hypotheses of the Barr-Beck-Lurie Monadicity Theorem stated in the previous section. Corollary \ref{cor:conservative} states that $\Phi$ is conservative. To apply the Barr-Beck-Lurie theorem, what remains is to establish the following:
\begin{prop} 
$\Phi: \mscr_n^f \ra \Sp_{T(n)}$ commutes with geometric realizations. 
\end{prop}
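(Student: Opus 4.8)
The plan is to reduce the statement to a computation of $v$-periodic homotopy groups, using the explicit model $\Phi(X)^V \simeq \Phi_v(X)$ described above. Since the family of functors $E \mapsto E^V$, for $V$ ranging over finite type $n$ complexes, is jointly conservative on $\Sp_{T(n)}$, and since each such functor preserves all colimits (it is smashing against the Spanier–Whitehead dual $DV$, a compact object), it suffices to show that for each such $V$ the composite $X \mapsto \Phi(X)^V \simeq \Phi_v(X)$ commutes with geometric realizations of simplicial objects in $\mscr_n^f$. Here I would first recall that geometric realizations in $\mscr_n^f$ are computed after applying the inclusion into $L_n^f \mathcal{S}_\ast \langle d_{n+1}\rangle$ (stated in the excerpt), and that this inclusion, being a left adjoint/left-exact localization of a presentable category, commutes with the relevant colimits; so I may as well compute the realization in pointed spaces and then reflect back.

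Next, unwinding the definition of $\Phi_v(X)$ as the spectrum with $kd$-th space $\mathrm{Map}_\ast(V,X)$ and structure maps induced by $v^\ast$, the claim becomes: for a simplicial object $X_\bullet$ in $\mscr_n^f$ with realization $|X_\bullet|$, the natural map
\[
\big|\,v^{-1}\pi_\ast(X_\bullet; V)\,\big| \longrightarrow v^{-1}\pi_\ast\big(|X_\bullet|;V\big)
\]
is an isomorphism. The key input is that the formation of $v^{-1}\pi_\ast(-;V)$ out of a space only sees a bounded range of its homotopy in each periodicity block: concretely, $v^{-1}\pi_\ast(X;V)$ is computed from $\mathrm{Map}_\ast(V,X)$ together with the $v$-self-map, and since $V$ is a \emph{finite} complex, $\mathrm{Map}_\ast(V,X)$ only depends on a finite Postnikov stage of $X$ in each degree. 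The heart of the argument is therefore a connectivity/stability estimate: geometric realizations of simplicial objects in $L_n^f$-local (highly connected) spaces behave well with respect to truncation, so that $\mathrm{Map}_\ast(V,-)$ — and hence the telescopic functor — commutes with them up to the finite ambiguity that is killed by inverting $v$. I would make this precise by invoking the analysis of geometric realizations in $\mscr_n^f$ from \cite{heuts-periodic}, in particular that a simplicial diagram's realization can be computed via its skeletal filtration and that $\Phi$, being the Bousfield–Kuhn functor, turns the skeletal filtration into a filtration of $T(n)$-local spectra whose associated graded is controlled.

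The main obstacle — and the step I expect to require real work — is precisely this commutation of $\Phi$ (equivalently of the telescopic functor $\Phi_v$) with the \emph{geometric realization}, as opposed to finite colimits or filtered colimits, which are comparatively formal. Geometric realization is a colimit over $\Delta^{\mathrm{op}}$, which is neither finite nor filtered, so one genuinely needs the chromatic input: the vanishing of $v_n$-periodic homotopy outside a controlled range, the compatibility of $\Phi$ with the connectivity estimates built into the definition of $\mscr_n^f$ via the spaces $V_{n+1}$, and the fact that $T(n)$-localization is compatible with the filtration coming from the simplicial skeleta. I anticipate the cleanest route is to exhibit $|X_\bullet|$ as a sequential colimit of its skeleta $\mathrm{sk}_k X_\bullet$, observe that $\Phi$ commutes with this sequential colimit (a filtered colimit, hence formal, using that $\Phi$ preserves filtered colimits — itself a consequence of $V$ being finite), and then check that $\Phi$ commutes with the pushouts/cofiber sequences relating $\mathrm{sk}_{k-1}$ to $\mathrm{sk}_k$, which reduces to $\Phi$ commuting with a specific class of finite colimits and with the suspension-type operations appearing in the latching-object description of skeleta. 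Verifying that last point rigorously in $\mscr_n^f$, where colimits are subtle, is where the bulk of the argument will lie.
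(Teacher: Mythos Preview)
Your opening reduction is exactly right and matches the paper: pass to $\Phi(X)^V \simeq \Phi_v(X)$, use that $(-)^V$ is jointly conservative and commutes with colimits, and compute the realization in $\mathcal{S}_*\langle d_{n+1}\rangle$ before localizing. The paper then reduces one step further, writing $\Phi_v$ as a sequential colimit of functors of the form $\Omega^{kd}\Sigma^\infty\Map_*(V,-)$, so that everything comes down to showing that $\Map_*(V,-)$ itself commutes with geometric realizations of simplicial objects in $\mathcal{S}_*\langle d_{n+1}\rangle$.

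The gap is in how you propose to finish. Your plan is to filter $|X_\bullet|$ by its simplicial skeleta and check that $\Phi$ (equivalently $\Map_*(V,-)$) preserves the pushouts $\mathrm{sk}_{k-1}\to\mathrm{sk}_k$. But $\Map_*(V,-)$ is a right adjoint and has no reason to preserve those pushouts; indeed it does not, in general, and the ``finite ambiguity killed by inverting $v$'' heuristic does not give a mechanism for this. You correctly flag this as the hard step, but the strategy as written does not go through.

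The paper's key move is different and avoids this problem entirely. It uses the freedom in the definition of $\mscr_n^f$: the connectivity threshold $d_{n+1}$ depends on the choice of the type $n+1$ space $V_{n+1}$, and one may choose $V_{n+1}$ so that $d_{n+1}\geq \dim V$. One is then reduced to the purely classical statement that $\Map_*(V,-)$ commutes with geometric realizations of simplicial spaces that are levelwise $(\dim V)$-connective. This is proved by cellular induction on $V$ (not on the simplicial object): the pushout building $V$ from $\mathrm{sk}_{n-1}V$ gives a levelwise homotopy pullback square of simplicial spaces with base $\Map_*(\bigvee S^{n-1},X_\bullet)$, which is levelwise connected by the connectivity hypothesis, so the Bousfield--Friedlander theorem says its realization is again a homotopy pullback. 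That is the missing idea.
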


\begin{proof}
Consider a simplicial object $X_\bullet \in (\mscr_n^f)^{\Delta^{\mathrm{op}}}$. We should check that the map
\begin{equation*}
|\Phi(X_\bullet)| \ra \Phi(|X_\bullet|)
\end{equation*}
is an equivalence of $T(n)$-local spectra. Choose a finite type $n$ space $W$ with a $v_n$ self-map $w: \Sigma^d W \ra W$. Then it suffices to check that
\begin{equation*}
|\Phi(X_\bullet)|^W \ra \Phi(|X_\bullet|)^W \simeq \Phi_w(|X_\bullet|)
\end{equation*}
is an equivalence. On the left we may commute the exponent $W$ past geometric realization (because $W$ is finite), so that the left-hand side is equivalent to $|\Phi_w(X_\bullet)|$. In other words, it suffices to check that 
\begin{equation*}
\Phi_w: \mscr_n^f \rightarrow \Sp_{T(n)}
\end{equation*}
preserves geometric realizations.

Recall that the inclusion $\mscr_n^f \ra L_n^f\mathcal{S}_*\ltri d_{n+1}\rtri$ preserves colimits, so that the colimit of any diagram $\xscr$ in $\mscr_n^f$ may be computed as follows:
\begin{equation*}
\colim_{\mscr_n^f} \xscr \simeq \colim_{L_n^f\mathcal{S}_*\ltri d_{n+1}\rtri}\xscr \simeq L_n^f (\colim_{\mathcal{S}_*\ltri d_{n+1}\rtri}\xscr).
\end{equation*}
It follows that there is an equivalence
\begin{equation*}
\Phi_w(\colim_{\mscr_n^f} \xscr) \simeq \Phi_w(\colim_{\mathcal{S}_*\ltri d_{n+1}\rtri}\xscr),
\end{equation*}
since $Y \ra L_n^f Y$ is a $v_n$-equivalence for any space $Y$. Thus we have reduced to showing that
\begin{equation*}
\Phi_w: \mathcal{S}_*\ltri d_{n+1} \rtri \ra \Sp_{T(n)}
\end{equation*}
preserves geometric realizations.

The definition of $\Phi_w$ implies the formula
\begin{equation*}
\Phi_w X \simeq \colim ( \Sigma^\infty \Map_\ast (W,X) \ra \Omega^d \Sigma^\infty \Map_\ast (W,X) \ra \cdots ).
\end{equation*} 
Since $\Sigma^\infty$ and $\Omega^d$ preserve colimits, it is sufficient to check that the functor
\begin{equation*}
\Map_\ast(W,-): \mathcal{S}_*\ltri d_{n+1} \rtri \ra \mathcal{S}_*
\end{equation*}
preserves geometric realizations.

Recall that the localization $L_n^f$ involved the choice of a finite type $n+1$ suspension space $V_{n+1}$, \emph{which can be chosen freely} (as long as its connectivity is at least that of $V_n$). In particular, we may choose $V_{n+1}$ so that its connectivity is at least the dimension of $W$. We have now reduced to showing that $\Map_\ast(W,-)$ commutes with geometric realizations of diagrams of spaces all of which have connectivity at least the dimension of $W$. This follows from some rather classical homotopy theory, which we summarize in the proof of Proposition \ref{prop:realize} below. 


\end{proof} 

\begin{prop}\label{prop:realize}
Let $W$ be a finite CW complex and $X_\bullet \in \mathcal{S}_{\ast}^{\Delta^{op}}$ a simplicial space such that $X_n$ is $\dim(W)$-connective for all $n$. Then the natural map
\[
\chi_W\colon |\Map_\ast (W, X_\ast)| \ra \Map_\ast (W, |X_\ast|)
\]
is an equivalence.
\end{prop}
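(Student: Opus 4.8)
The plan is to reduce the statement to a connectivity/finiteness argument via two simultaneous inductions: one on the cells of $W$ and one on the skeleta of the simplicial space $X_\bullet$. First I would observe that $\Map_\ast(W,-)$ turns homotopy cofiber sequences in the variable $W$ into homotopy fiber sequences, and that a CW complex is built from a point by attaching cells, i.e. by iterated homotopy pushouts $W' \to W \to W/W' \simeq W' \cup_{S^{k-1}} D^k$. Since geometric realization is a colimit and hence commutes with the finite homotopy pushouts that build $W$ out of spheres, and since the comparison map $\chi_W$ is natural in $W$ and compatible with the relevant fiber/cofiber sequences, a Mayer--Vietoris / five-lemma-style argument lets me reduce to the case $W = S^k$, a sphere — provided I control connectivity at each stage so that the relevant fiber sequences remain fiber sequences after realization (this is where the hypothesis that each $X_n$ is $\dim(W)$-connective enters, so that the spaces $\Map_\ast(S^j, X_n)$ are sufficiently highly connected for all cells $S^j$ appearing in $W$).

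For $W = S^k$ the claim becomes that the loop functor $\Omega^k = \Map_\ast(S^k,-)$ commutes with geometric realization of $k$-connective simplicial spaces. I would prove this by the classical fact that geometric realization commutes with $\Omega^k$ as soon as the simplicial space takes values in sufficiently connected spaces — concretely, one can model $|X_\bullet|$ by its skeletal filtration and use that $\Omega^k$ commutes with filtered colimits and sends the cofiber sequences $\sk_{m-1}|X_\bullet| \to \sk_m |X_\bullet| \to \Sigma^m (X_m / s X_m)$ governing the skeletal filtration to fiber sequences in a stable range, which widens without bound because attaching an $m$-simplex raises connectivity by $m$. Equivalently, and perhaps more cleanly, I would invoke the statement that for a simplicial space all of whose terms are $c$-connective, the natural map $|X_\bullet| \to \mathrm{hocolim}$ is highly connected relative to the skeletal approximations, and that $\Omega^k$ commutes with such colimits through a range tending to infinity; since $\Omega^k$ commutes with finite limits, an $\mathrm{Ext}$/Milnor-sequence or connectivity estimate then forces $\chi_{S^k}$ to be an equivalence.

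The main obstacle I anticipate is bookkeeping the connectivity estimates so that every fiber sequence used in the dévissage — both in the $W$-induction and in the skeletal induction on $X_\bullet$ — stays exact after realization; the geometric realization of a simplicial space is only a homotopy colimit, so commuting it past $\Map_\ast(W,-)$ (a limit construction) is exactly the kind of thing that can fail without connectivity hypotheses, and the whole point of assuming $X_n$ is $\dim(W)$-connective is to guarantee that $\Map_\ast(W, X_n)$ lands in a range where the Blakers--Massey / stability phenomena make realization and the finite limits defining $\Map_\ast(W,-)$ commute. A clean way to package this is to use that on $c$-connective spaces, $\Sigma^\infty$ is highly connected and $\Map_\ast(W,-)$ is "approximately stable" in the relevant range, but I expect the cited classical homotopy theory (e.g. results of the type found in work of Bousfield or in the treatment of realization of simplicial spaces) to supply exactly the needed statement, so that the proof amounts to assembling these ingredients with careful attention to the connectivity indices.
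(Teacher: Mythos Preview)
Your cell-induction strategy on $W$ is the same as the paper's. The crucial technical input you are circling around but never name is the following lemma (stated in the paper as a consequence of the Bousfield--Friedlander theorem): geometric realization preserves a levelwise homotopy pullback square of simplicial spaces whenever the base simplicial space is levelwise connected. Once you have this, the inductive step is immediate. Writing $W$ of dimension $n \leq d$ as the pushout $\sk_{n-1}W \cup_{\bigvee S^{n-1}} \ast$ yields a levelwise pullback square of simplicial spaces with base $\Map_\ast(\bigvee S^{n-1}, X_\bullet) \simeq \prod \Omega^{n-1}X_\bullet$, and this base is levelwise connected precisely because $n-1 < d$ and each $X_m$ is $d$-connective. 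The lemma then says realization preserves this pullback; combined with the inductive hypothesis for $\sk_{n-1}W$ and $\bigvee S^{n-1}$, this forces $\chi_W$ to be an equivalence. No separate treatment of spheres is needed, and no second induction on the skeleta of $X_\bullet$ is needed: the base case is $S^0$, which is trivial.

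Your proposed workaround --- reduce to $W=S^k$ and then run a skeletal filtration argument on $|X_\bullet|$ with Blakers--Massey estimates --- is both unnecessary and underspecified. To carry it out you would have to compare $\sk_m|\Omega^k X_\bullet|$ with $\Omega^k\sk_m|X_\bullet|$, which means commuting $\Omega^k$ past the latching colimits that build the skeleta; this is not obviously easier than the original problem, and your sketch does not say how to do it. More to the point, your own phrase ``provided I control connectivity so that the relevant fiber sequences remain fiber sequences after realization'' is exactly the content of the pullback-preservation lemma above, and the only connectivity hypothesis that lemma requires is that the base be levelwise connected. Identifying and invoking that lemma is the missing step; once you have it, the whole second half of your plan evaporates.
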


To prove Proposition \ref{prop:realize} we will make use of the following
lemma. Compare \cite[Theorem B.4]{BF} or \cite[Proposition 5.4]{Rezk-pistar}. 

\begin{lem}\label{lem:hopb-prod}
For a diagram of simplicial spaces
\[
\xymatrix{X_{\bullet} \ar[r]\ar[d] & E_{\bullet} \ar[d]\\ Y_{\bullet} \ar[r] & B_{\bullet}\\}
\]
which is a levelwise homotopy pullback, and where $B_n$ is connected for every $n$, the natural map
\[
|X_{\bullet}| \ra |E_{\bullet}| \x_{|B_{\bullet}|} |Y_{\bullet}|
\]
is an equivalence.
\end{lem}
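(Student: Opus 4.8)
The plan is to prove Lemma \ref{lem:hopb-prod} by reducing the statement about general levelwise homotopy pullbacks to the case of a levelwise \emph{fibration}, and then using the fact that geometric realization commutes with (homotopy) pullbacks along fibrations. Concretely, I would first replace $E_\bullet \to B_\bullet$ levelwise by an equivalent fibration; since the original square is a levelwise homotopy pullback, $X_\bullet$ is then levelwise equivalent to the strict pullback $E'_\bullet \times_{B_\bullet} Y_\bullet$, and since geometric realization is invariant under levelwise equivalence, it suffices to show that
\[
|E'_\bullet \times_{B_\bullet} Y_\bullet| \ra |E'_\bullet| \x_{|B_\bullet|} |Y_\bullet|
\]
is an equivalence, where now $E'_\bullet \to B_\bullet$ is a levelwise (Kan, or Serre) fibration. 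The hypothesis that each $B_n$ is connected is what makes this work: it ensures that the simplicial space $B_\bullet$ is ``good'' enough that its realization behaves correctly and, more importantly, that the fibration $E'_\bullet \to B_\bullet$ realizes to a quasifibration or at least that the relevant square stays a homotopy pullback after realization.

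The technical heart is a theorem essentially due to Bousfield--Friedlander (their $\pi_*$-Kan condition / Theorem B.4) and May: for a levelwise fibration of simplicial spaces $E_\bullet \to B_\bullet$ with $B_\bullet$ levelwise connected, the map on realizations $|E_\bullet| \to |B_\bullet|$ has homotopy fiber over a point of $|B_\bullet|$ equivalent to the realization of the simplicial space of fibers. Equivalently, geometric realization of such a diagram commutes with the pullback. I would either cite this directly in the form of \cite[Theorem B.4]{BF} or \cite[Proposition 5.4]{Rezk-pistar} (as the statement of the lemma already suggests), or sketch the argument: filter by skeleta, and at each stage the pushout defining $\sk_k$ from $\sk_{k-1}$ interacts with the fibration via the fact that attaching cells along maps into a connected base does not change the fiber up to homotopy; the connectivity of $B_n$ guarantees the needed $\pi_0$-surjectivity at each level so that no component is missed. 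The bisimplicial bookkeeping (realizing in the $\Delta^{op}$ direction versus the simplicial-set direction, and commuting the two) is routine once the fibration case is in hand.

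The step I expect to be the main obstacle is verifying that the levelwise fibration replacement can be done \emph{compatibly across the simplicial direction} — i.e.\ producing a genuine simplicial object $E'_\bullet$ with a levelwise fibration to $B_\bullet$ and a levelwise equivalence $E_\bullet \simeq E'_\bullet$ — and that the pullback $E'_\bullet \times_{B_\bullet} Y_\bullet$ computes the homotopy pullback levelwise. In the model-categorical world this is the statement that the Reedy or injective model structure on simplicial spaces has enough functorial factorizations, which is standard; in the $\infty$-categorical language one instead observes that ``levelwise homotopy pullback'' is exactly the condition that the square of simplicial objects is a pullback in $\Fun(\Delta^{op}, \Sscr_*)$, and one reduces to the pointwise statement. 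I would handle this by working with explicit fibrant replacement in simplicial sets applied levelwise, which is automatically functorial and hence simplicial. Once that reduction is secured, the remaining content is precisely the cited Bousfield--Friedlander-type result, so I would lean on that citation rather than reprove it.
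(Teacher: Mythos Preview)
The paper does not actually prove this lemma: it is stated with the prefatory remark ``Compare \cite[Theorem B.4]{BF} or \cite[Proposition 5.4]{Rezk-pistar}'' and then used as a black box in the proof of Proposition~\ref{prop:realize}. Your proposal sketches the standard proof of the cited Bousfield--Friedlander result (reduce to a levelwise fibration via functorial factorization, then invoke the $\pi_*$-Kan/connectedness argument to see that realization preserves the pullback), which is correct and is indeed how the references you name establish it. So you are doing strictly more than the paper does; for the purposes of matching the paper, a bare citation suffices.
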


The idea is to use the lemma and skeletal induction on $W$. 

\begin{proof}[Proof of Prop \ref{prop:realize}]:
Fix a simplicial space $X_{\bullet}$ such that each $X_n$ is $d$-connective.
First, recall that realization commutes with finite products, and 
$$\Map_\ast( A \Wedge B, X) \simeq \Map_\ast(A, X) \x \Map_\ast (B, X).$$
Therefore $\chi_{A\vee B}$ is an equivalence if $\chi_A$ and $\chi_B$ are equivalences. 

We consider the collection $\mathcal{C}$ of pointed spaces
$V$ such that the natural map $\chi_V$ is an equivalence. 
The previous paragraph implies that $\mathcal{C}$ is closed under finite
wedge sums. We claim that any finite CW complex $W$ of dimension $\leq d$ belongs
to $\mathcal{C}$. 

To see this, we use induction on 
$n = \dim W$. Clearly $S^0 \in \mathcal{C}$, so the fact that $\mathcal{C}$ is closed
under finite wedges covers the case $n = 0$. 
For $n>0$, we can write $W$ as a homotopy pushout \[
\xymatrix{
\bigvee S^{n-1} \ar[r]\ar[d] & \sk_{n-1} W\ar[d]\\
\ast \ar[r] & W.\\
}
\]
This leads to a homotopy cartesian diagram 
of simplicial spaces
\begin{equation*} 
\xymatrix{
\Map_\ast (W, X_{\bullet}) \ar[r]\ar[d] & \Map_\ast (\sk_{n-1} W, X_{\bullet}) \ar[d]\\
\ast \ar[r] & \Map_\ast (\BigWedge S^{n-1}, X_{\bullet}).
}
\end{equation*}
Our assumptions on $n$ and $d$ guarantee that the simplicial space $\Map_\ast (\BigWedge
S^{n-1}, X_{\bullet})$ is (levelwise) connected, so that the realization of the square above is still homotopy cartesian by Lemma \ref{lem:hopb-prod}. Consider the following cube, of which the left and right face are homotopy cartesian:
\[\scalebox{.85}{$
\xymatrix{
|\Map_\ast(W, X_{\bullet})| \ar[rr]^{\hspace{1.5cm}\chi_W}\ar[dd] \ar[dr]&& \Map_\ast (W, |X_{\bullet}|) \ar[dr]\ar[dd]\\
 & |\Map_\ast (\sk_{n-1} W, X_{\bullet})| \ar[rr]^{\hspace{-1.75cm}\chi_{\mathrm{sk}_{n-1}
W}}\ar[dd] && \Map_\ast (\sk_{n-1}W, |X_{\bullet}|)\ar[dd]\\
 \ast \ar[rr]^{\hspace{1.5cm}\chi_{\ast}}\ar[dr]&& \ast\ar[dr]\\
 & |\Map_\ast (\BigWedge S^{n-1}, X_{\bullet})| \ar[rr]^{\hspace{-1.75cm}\chi_{\bigvee S^{n-1}}}&& \Map_\ast (\BigWedge S^{n-1}, |X_{\bullet}|)
}$}
\] 
The horizontal maps $\chi_{\ast}, \chi_{\bigvee S^{n-1}}, $ and $\chi_{\mathrm{sk}_{n-1}
W}$ are equivalences by the inductive hypothesis, so that $\chi_W$ is an equivalence as well.
\end{proof}

\bibliographystyle{alpha}
\bibliography{biblio}

\end{document}